\def\c{{\bf {c}}}
\newtheorem{thm}{Theorem}[section]
\newtheorem{lemma}[thm]{Lemma}
\theoremstyle{definition}
\title{General matrix transform method for the Riesz space fractional advection-dispersion equations}
\author{Abdollah Borhanifar\footnote{Corresponding author (Email): borhani@uma.ac.ir; (Tel):+984531505904; (Fax):+984531514702.}, Sohrab Valizadeh
 \\
 {\footnotesize\em Department of Mathematics and applications, Faculty of Sciences, University of Mohaghegh Ardabili, IR56199-179, Ardabil, Iran}\\
 }
\date{\today}
\numberwithin{equation}{section}
\begin{document}
\maketitle
\newtheorem{Definition}{Definition}[section]
\newtheorem{theorem}{Theorem}[section]
\newtheorem{example}{Example}[section]
\newtheorem{proposition}{Proposition}
\begin{abstract}
In this paper, a mixed high order finite difference scheme-Pad\'{e} approximation method is applied to obtain numerical solution of the Riesz space fractional advection-dispersion equation(RSFADE). This method is based on the high order finite difference scheme that derived from fractional centered difference and Pad\'{e} approximation method for space and time integration, respectively. The stability analysis of the proposed method is discussed via theoretical matrix analysis. Numerical experiments are presented to confirm the theoretical results of the proposed method.
\end{abstract}
\noindent{\bf Mathematics Subject Classification: }{\small 34K28, 65M06, 65M12, 35R11.}\\
{\bf \textit{Keyword}}:  {\small  Riesz fractional derivative, fractional advection-dispersion equation, Pad\'{e} approximation method, fractional centered difference, stability.}
\section{Introduction}
\label{sec:Introduction}
Fractional partial differential equations have been increasingly attracting interest over the last two decades because of its demonstrated applications in numerous apparently diverse and widespread fields of science and engineering, including viscoelasticity, fluid mechanics \cite{Kilbas2006}, finance \cite{Scalas2000376}, medical imaging \cite{Yu2013}, analytical chemistry, fractional multi-polar \cite{Hilfer2000}, formulating physical, chemical and biological sciences \cite{Magin2006}, and hydrology \cite{Schumer200169,Schumer20031022}.\\

Various explanations were offered for fractional-order derivatives and integrals can be mentioned such as the Riemann-Liouville, Caputo, Gr\"{u}nwald-Letnikov and other approaches \cite{Borhanifar2012433,Borhanifar2013427,Borhanifar2015466}. The Riesz fractional derivative in the article dealing with this type of derivative which combined on the left and right Riemann-Liouville derivatives with coefficients with respect to the corresponding order \cite{Valizadeh202018}. This type derivative is showcased in the fractional advection-dispersion equation that is applied to model the transport of passive tracers carried by fluid flow in a porous medium.\\

According to importance of subject, a number of mathematicians and researchers have done theoretical studies in the field of fractional calculus. Abbas and Ragusa \cite{Abbas20201} scrutinized the solvability of Langevin equations with two Hadamard fractional derivatives via Mittag-Leffler functions. The regularity properties of solutions of elliptic, parabolic and ultraparabolic equations of second order with discontinuous coefficients have been discussed in depth \cite{Ragusa201294}. Ragusa and Scapellato \cite{Ragusa201751} obtained regularity results for solutions of partial differential equations of parabolic type.  Valizadeh et al. \cite{Valizadeh202012} surveyed optimal feedback control for fractional semilinear integro-differential equations in an arbitrary Banach space.  Review, analyze and solve these equations is a concern for researchers. To obtain a closed form solution to these problems is not always possible and in many cases impossible. Therefore, approximate methods must be used to resolve these issues.\\

Many researchers have presented algorithmic approaches to solve the Riesz fractional advection-dispersion equations (RFADE). Shen et al. \cite{Shen2008850} examined fundamental solution and numerical solution for the Riesz fractional advection-dispersion equation and also \cite{Shen2011383} proposed explicit and implicit difference approximations for the space-time Riesz-Caputo fractional advection-diffusion equation with initial value and boundary conditions in a finite domain and by using mathematical induction proved that the implicit difference approximation is unconditionally stable and convergent, but the explicit difference approximation is conditionally stable and convergent and further the Richardson extrapolation method has been handled to remedy the shortcomings of this method. Yang et al. \cite{Yang2010200} considered L2-approximation, shifted Gr\"{u}nwald approximation and matrix transform methods for fractional partial differential equations with Riesz space fractional derivatives. Ding and Zhang \cite{Ding20121135} applied matrix transform method for Riesz fractional diffusion equation(RFDE) and RFADE by powering to half-order of fractional derivative of Toeplitz matrix corresponding to the second derivative also \cite{Zhang2014266} used improved matrix transform method for the Riesz space fractional reaction dispersion equation. \c{C}elik and Duman \cite{Celik20121743} utilized a mixed fractional centered difference that defined by Ortigueira \cite{Ortigueira20061} for the discreting of the Riesz fractional derivative and Crank-Nicolson method for solving the fractional diffusion equation with the Riesz fractional derivative. Chen et al. \cite{Chen201322} examined superlinearly convergent algorithms for the two-dimensional space-time Caputo-Riesz fractional diffusion equation.
Popolizio \cite{Popolizio20131975} discussed the discretization of Riesz derivatives by fractional centered difference schemes and described the coefficients of this discretization scheme and then obtained the explicit expression of the resulting discretization matrix. Rahman et al. \cite{Rahman2014264} estimated the RFADE with respect to the space variable via improved matrix transform method and after $[3,1]$ Pad\'{e} approximation used  construct the numerical computation of exponential matrix in the analytical form of the out coming ordinary differential equation(ODE).\\

The aim of the current study is to provide a generalized matrix transform method for solving the Riesz space fractional advection-dispersion equation. In order to, we consider the following fractional partial differential equations with the Riesz space fractional derivatives:
\begin{eqnarray}\label{eqnarray1804090938}
\frac{\partial u(x,t)}{\partial
t}=\mathcal{K}_{\alpha}\frac{\partial^{\alpha}u(x,t)}{\partial
|x|^{\alpha}}+\mathcal{K}_{\beta}\frac{\partial^{\beta}u(x,t)}{\partial
|x|^{\beta}},\quad 0< x < L,\quad 0\leq t \leq T,
\end{eqnarray}
subject to the initial value and zero Dirichlet boundary conditions
given by
\begin{eqnarray}\label{eqnarray1804090939}
u(x,0)=\psi(x),\quad 0\leq x \leq L,
\end{eqnarray}
\begin{eqnarray}\label{eqnarray1804090940}
u(0,t)=u(L,t)=0,\quad 0\leq t \leq T
\end{eqnarray}
where $u$ is a solute concentration, $\frac{\partial^{\alpha}}{\partial
|x|^{\alpha}}$ for $1<\alpha\leq2$ and $\frac{\partial^{\beta}}{\partial
|x|^{\beta}}$ for $0<\beta<1$ are Riesz space fractional operators on a finite domain $[0,L]$ and $\mathcal{K}_{\alpha}>0$  and
$\mathcal{K}_{\beta}\geq0$ represent the dispersion coefficient and the average fluid velocity, respectively.\\

The layout of this paper is organized as follows: In Section 2, preliminaries, basic definitions, essential theorems and lemmas are presented that need in the proceeding of this paper, Section 3 contains the introduce of numerical method via Pad\'{e} approximate and fractional centered scheme in novel form, In section 4, the stability of the proposed method for the Riesz fractional advection-dispersion equation is discussed, and the accuracy and efficiency of the iterative scheme are checked by numerical experiments in Sections 5. Finally, we end up this paper by conclusion in Section 6.
\section{Preliminaries}
\label{sec2}
\noindent
In this section, we consider some important definitions and lemmas that will be necessary for encouraging process the aims of paper.
\begin{Definition}
The left- and right-sided Riemann-Liouville fractional derivatives of order $\nu$
of $f(x)$ that be a continuous and necessary function on $[0,L]$ are defined respectively as \cite{Podlubny1999},
\begin{eqnarray}\label{eqnarray1804090958}
_{0} D_{x}^{\nu}f(x)=\frac{1}{\Gamma(n-\nu)}\frac{d^{n}}{d x^{n}}\int_{0}^{x}\frac{f(\xi)}{(x-\xi)^{\nu-n+1}}d\xi,
\end{eqnarray}
\begin{eqnarray}\label{eqnarray1804090959}
_{x} D_{L}^{\nu}f(x)=\frac{(-1)^n}{\Gamma(n-\nu)}\frac{d^{n}}{d x^{n}}\int_{x}^{L}\frac{f(\xi)}{(\xi-x)^{\nu-n+1}}d\xi,
\end{eqnarray}
where $n-1 < \nu \leq n$, $n\in \mathbb{N}$ and $n$ is the smallest integer greater than $\nu$.
\end{Definition}
\begin{Definition}
The Riesz fractional derivatives of order $\nu$ of $f(x)$ on a finite interval $[0,L]$ is defined as \cite{Gorenflo1999231},
\begin{eqnarray}\label{eqnarray1804091002}
\frac{d^{\nu}f(x)}{d
|x|^{\nu}}=-c_{\nu}\{_{0}D_{x}^{\nu}f(x)+_{x}D_{L}^{\nu}f(x)\},
\end{eqnarray}
where
$c_{\nu}=\frac{1}{2cos(\frac{\pi\nu}{2})}$, $n-1< \nu \leq n$ and $\nu\neq1$.
\end{Definition}
Here we consider the approximation with step $\mathit{h}$ of the Riesz fractional derivative that obtained by calculating the appropriate coefficients for the fractional central difference by applying Fourier transform \cite{Ding2015218}\\
\begin{eqnarray}\label{eqnarray1804091007}
\frac{d^{\nu}u(x)}{d
|x|^{\nu}}=-h^{-\nu}\sum_{s\in\textbf{Z}}\vartheta_{s,p}^{(\nu)}\mathcal{H}_{s}^{(\nu)}u(x)+\mathcal{O}(h^{p}),\quad n-1< \nu \leq n \quad and \quad \nu\neq1,
\end{eqnarray}
where
\begin{eqnarray}\label{eqnarray1804091008}
\mathcal{H}_{s}^{(\nu)}u(x)=\sum_{k=-\infty}^{\infty}\omega_{k}^{(\nu)}u(x-(k+s)h),
\end{eqnarray}
and all coefficients $\omega_{k}^{(\nu)}$ are defined by
\begin{eqnarray}\label{eqnarray1804091009}
\omega_{k}^{(\nu)}=\frac{(-1)^{k}\Gamma(\nu+1)}{\Gamma(\frac{\nu}{2}-k+1)\Gamma(\frac{\nu}{2}+k+1)}, \quad k=0,\pm 1,\pm 2,...,
\end{eqnarray}
We survey the properties of the coefficients $\omega_{k}^{(\nu)}$ that are appeared at the approximate formula for Riesz fractional derivatives.
\begin{lemma}\label{Thm99}
    \cite{Celik20121743}The coefficients  $\omega_{k}^{(\nu)}$ for $k\in \mathbb{Z}$ in (\ref{eqnarray1804091009}) satisfy:\\
(a) $\omega_{0}^{(\nu)}\geq0$, $\omega_{-k}^{(\nu)}=\omega_{k}^{(\nu)}\leq 0$ for all $\mid k \mid \geq 1$,\\
(b) $\sum_{k=-\infty}^{\infty}\omega_{k}^{(\nu)}=0$,\\
(c) For any positive integer $n$ and $m$ with $n<m$, we have $\sum_{k=-m+n}^{n}\omega_{k}^{(\nu)}>0$.
\end{lemma}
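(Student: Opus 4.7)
The plan is to dispatch the three parts in order, letting (a) and (b) feed into (c). For \textbf{(a)}, I would first read off $\omega_0^{(\nu)} = \Gamma(\nu+1)/[\Gamma(\nu/2+1)]^2 > 0$. The symmetry $\omega_{-k}^{(\nu)} = \omega_k^{(\nu)}$ is immediate: the substitution $k \mapsto -k$ only swaps the two Gamma factors in (\ref{math4-2}), and $(-1)^{-k}=(-1)^k$. For the sign when $k \geq 1$, I apply Euler's reflection formula $\Gamma(z)\Gamma(1-z) = \pi/\sin(\pi z)$ with $z = \nu/2 - k + 1$ to rewrite
\[
\frac{1}{\Gamma(\nu/2-k+1)} = \frac{\sin(\pi(\nu/2-k+1))\,\Gamma(k-\nu/2)}{\pi} = -\frac{(-1)^{k}\sin(\pi\nu/2)\,\Gamma(k-\nu/2)}{\pi}.
\]
Substituting back into (\ref{math4-2}) produces $\omega_k^{(\nu)} = -\sin(\pi\nu/2)\,\Gamma(\nu+1)\,\Gamma(k-\nu/2)/[\pi\,\Gamma(\nu/2+k+1)]$, which is strictly negative for $k \geq 1$ in the admissible ranges $\nu \in (0,1)\cup(1,2)$, since $\sin(\pi\nu/2)>0$ and both Gamma values in the fraction are positive.

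For \textbf{(b)}, the plan is to identify $\omega_k^{(\nu)}$ as the $k$-th Fourier coefficient of $(2\sin(\omega/2))^{\nu}$. On $\omega\in(0,2\pi)$ I would write $(2\sin(\omega/2))^{\nu} = [(1-e^{i\omega})(1-e^{-i\omega})]^{\nu/2}$ and split into principal branches, which is legitimate because the arguments of the two factors are opposite and so cancel. Expanding each factor by the generalized binomial series and forming the Cauchy product, the coefficient of $e^{ik\omega}$ becomes $(-1)^k \sum_{m}\binom{\nu/2}{m+k}\binom{\nu/2}{m}$; a Chu--Vandermonde-type identity $\sum_{m}\binom{a}{m}\binom{a}{m+k} = \binom{2a}{a+k}$ at $a=\nu/2$ (justified by analytic continuation in $\nu$ from the integer case, or directly by a Beta-function evaluation) collapses this to $(-1)^k\binom{\nu}{\nu/2+k} = \omega_k^{(\nu)}$. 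Hence $(2\sin(\omega/2))^{\nu} = \sum_{k}\omega_k^{(\nu)}e^{ik\omega}$, and evaluating at $\omega=0$ gives $0 = \sum_{k}\omega_k^{(\nu)}$.

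Given (a) and (b), part \textbf{(c)} is essentially free: since $\sum_{k\in\mathbb{Z}}\omega_k^{(\nu)} = 0$, the truncated sum satisfies
\[
\sum_{k=n-m}^{n}\omega_k^{(\nu)} \;=\; -\sum_{k<n-m}\omega_k^{(\nu)} \;-\; \sum_{k>n}\omega_k^{(\nu)},
\]
and with $n\geq 1$ and $n-m\leq -1$ both tails are nonempty; by (a) each tail is a sum of strictly negative terms, so the right-hand side is strictly positive. The main obstacle will be (b): one must justify the Fourier expansion on the unit circle, where the binomial series is at best conditionally convergent, and the Chu--Vandermonde identity for non-integer upper parameter. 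If the direct series manipulation feels fragile, a cleaner fallback is to compute the Fourier coefficient $\frac{1}{2\pi}\int_{0}^{2\pi}(2\sin(\omega/2))^{\nu}e^{-ik\omega}d\omega$ via the Beta-function identity, which yields $\omega_k^{(\nu)}$ directly and turns (b) into a one-line evaluation at $\omega=0$.
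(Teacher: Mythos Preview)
Your argument is sound in all three parts. The paper itself does not supply a proof of this lemma: it merely records the statement and refers the reader to \cite{Celik20121743}. So there is no in-paper proof to compare against; your proposal in fact fills the gap the authors leave.

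A couple of minor remarks. In part (b), the convergence worry you flag is lighter than you suggest: from the reflection-formula form you derived in (a) and Stirling's approximation one has $|\omega_k^{(\nu)}|\sim C\,|k|^{-\nu-1}$, so the series $\sum_k\omega_k^{(\nu)}$ is absolutely convergent for every $\nu>0$, and the evaluation of the Fourier series at $\omega=0$ is unproblematic. In part (c), your use of (a)+(b) is exactly how the cited reference proceeds as well; note that $n\ge 1$ and $n-m\le -1$ ensure both tails $\{k>n\}$ and $\{k<n-m\}$ are nonempty, so strict positivity (rather than mere nonnegativity) indeed follows.
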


By considering references \cite{Ding2015221} and \cite{Ding2015218}, the coefficients for various indices $p=2,4,6,8,10$ and $12$ are regularly given below\\
for $p=2$,
\begin{eqnarray*}
\vartheta_{0,2}^{(\nu)}=1,
\end{eqnarray*}
for $p=4$,
\begin{eqnarray*}
\vartheta_{-1,4}^{(\nu)}=\vartheta_{1,4}^{(\nu)}=-\frac{\nu}{24}, \quad \vartheta_{0,4}^{(\nu)}=\frac{\nu}{12}+1,
\end{eqnarray*}
for $p=6$,
\begin{eqnarray*}
\vartheta_{-2,6}^{(\nu)}=\vartheta_{2,6}^{(\nu)}=(\frac{\nu}{1152}+\frac{11}{2880})\nu, \quad \vartheta_{-1,6}^{(\nu)}=\vartheta_{1,6}^{(\nu)}=-(\frac{\nu}{288}+\frac{41}{720})\nu,
\end{eqnarray*}
\begin{eqnarray*}
\vartheta_{0,6}^{(\nu)}=\frac{\nu^2}{192}+\frac{17\nu}{160}+1
\end{eqnarray*}
for $p=8$,
\begin{eqnarray*}
\vartheta_{-3,8}^{(\nu)}=\vartheta_{3,8}^{(\nu)}=-(\frac{\nu^2}{82944}+\frac{11\nu}{69120}+\frac{191}{362880})\nu,
\end{eqnarray*}
\begin{eqnarray*} \vartheta_{-2,8}^{(\nu)}=\vartheta_{2,8}^{(\nu)}=(\frac{\nu^2}{13824}+\frac{7\nu}{3840}+\frac{211}{30240})\nu,
\end{eqnarray*}
\begin{eqnarray*}
\vartheta_{-1,8}^{(\nu)}=\vartheta_{1,8}^{(\nu)}=-(\frac{5\nu^2}{27648}+\frac{3\nu}{512}+\frac{7843}{120960})\nu, \quad \vartheta_{0,8}^{(\nu)}=\frac{5\nu^3}{20736}+\frac{29\nu^2}{3456}+\frac{5297\nu}{45360}+1,
\end{eqnarray*}
for $p=10$,
\begin{eqnarray*}
\vartheta_{-4,10}^{(\nu)}=\vartheta_{4,10}^{(\nu)}=(\frac{\nu^3}{7962624}+\frac{11\nu^2}{3317760}+\frac{10181\nu}{348364800}+\frac{2497}{29030400})\nu, \quad
\end{eqnarray*}
\begin{eqnarray*}
\vartheta_{-3,10}^{(\nu)}=\vartheta_{3,10}^{(\nu)}=-(\frac{\nu^3}{995328}+\frac{\nu^2}{25920}+\frac{17111\nu}{43545600}+\frac{1469}{1209600})\nu,
\end{eqnarray*}
\begin{eqnarray*}
\vartheta_{-2,10}^{(\nu)}=\vartheta_{2,10}^{(\nu)}=(\frac{7\nu^3}{1990656}+\frac{137\nu^2}{829440}+\frac{32861\nu}{12441600}+\frac{68119}{7257600})\nu, \quad
\end{eqnarray*}
\begin{eqnarray*}
\vartheta_{-1,10}^{(\nu)}=\vartheta_{1,10}^{(\nu)}=-(\frac{7\nu^3}{995328}+\frac{19\nu^2}{51840}+\frac{46631\nu}{6220800}+\frac{252769}{3628800})\nu,
\end{eqnarray*}
\begin{eqnarray*}
\vartheta_{0,10}^{(\nu)}=\frac{35\nu^4}{3981312}+\frac{157\nu^3}{331776}+\frac{51941\nu^2}{4976640}+\frac{118829\nu}{967680}+1,
\end{eqnarray*}
for $p=12$,
\begin{eqnarray*}
\vartheta_{-5,12}^{(\nu)}=\vartheta_{5,12}^{(\nu)}=-(\frac{\nu^4}{955514880}+\frac{11\nu^3}{238878720}+\frac{6361\nu^2}{8360755200}+\frac{11693\nu}{2090188800}+\frac{14797}{958003200})\nu, \quad
\end{eqnarray*}
\begin{eqnarray*}
\vartheta_{-4,12}^{(\nu)}=\vartheta_{4,12}^{(\nu)}=(\frac{\nu^4}{95551488}+\frac{7\nu^3}{11943936}+\frac{9133\nu^2}{836075520}+\frac{5563\nu}{65318400}+\frac{230371}{958003200})\nu, \quad
\end{eqnarray*}
\begin{eqnarray*}
\vartheta_{-3,12}^{(\nu)}=\vartheta_{3,12}^{(\nu)}=-(\frac{\nu^4}{21233664}+\frac{49\nu^3}{15925248}+\frac{13529\nu^2}{185794560}+\frac{449171\nu}{696729600}+\frac{203257}{106444800})\nu, \quad
\end{eqnarray*}
\begin{eqnarray*}
\vartheta_{-2,12}^{(\nu)}=\vartheta_{2,12}^{(\nu)}=(\frac{\nu^4}{7962624}+\frac{\nu^3}{110592}+\frac{17869\nu^2}{69672960}+\frac{24041\nu}{7257600}+\frac{299093}{26611200})\nu, \quad
\end{eqnarray*}
\begin{eqnarray*}
\vartheta_{-1,12}^{(\nu)}=\vartheta_{1,12}^{(\nu)}=-(\frac{7\nu^4}{31850496}+\frac{133\nu^3}{7962624}+\frac{20953\nu^2}{39813120}+\frac{431513\nu}{49766400}+\frac{11639731}{159667200})\nu, \quad
\end{eqnarray*}
\begin{eqnarray*}
\vartheta_{0,12}^{(\nu)}=\frac{7\nu^5}{26542080}+\frac{203\nu^4}{9953280}+\frac{22061\nu^3}{33177600}+\frac{2303\nu^2}{194400}+\frac{6742753\nu}{53222400}+1,
\end{eqnarray*}
and the remaining coefficients that are not mentioned, are zero.\\
Now we provide the features of the new coefficients which are achieved from the rigorous scrutiny of these coefficients and which are in the general state.
\begin{lemma}\label{Thm98}
    The coefficients  $\vartheta_{s,p}^{(\nu)}$ for $s\in \mathbb{Z}$ and $p=2k$, $k\in \mathbb{N}$ in (\ref{eqnarray1804091007}) satisfy:\\
(a) $\vartheta_{0,p}^{(\nu)}\geq1$, $\vartheta_{-s,p}^{(\nu)}=\vartheta_{s,p}^{(\nu)}$ for all $\mid s \mid \geq 1$,\\
(b) $(-1)^s \vartheta_{s,p}^{(\nu)}\geq 0$ for all $s$ (alternately being the positive and negative sentences),\\
(c) $\mid \vartheta_{s,p}^{(\nu)} \mid \geq \mid \vartheta_{s+1,p}^{(\nu)} \mid$ for each even number $p$ and fix positive integer $s$,\\
(d) $\mid \vartheta_{s,p}^{(\nu)} \mid \geq \mid \vartheta_{s,p+2}^{(\nu)} \mid$ for each positive number $s$ and fix even number $p$,\\
(e) $\sum_{s=-\infty}^{\infty}\vartheta_{s,p}^{(\nu)}=1$,\\
(f) For any positive integer $n$ and $m$ with $n<m$, we have $\sum_{s=-m+n}^{n}\vartheta_{s,p}^{(\nu)}>0$.
\end{lemma}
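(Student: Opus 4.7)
The plan is to reduce everything to a single generating-function identity for the coefficients. Using the known symbol $\sum_{k}\omega_{k}^{(\nu)}e^{-ik\theta}=(2\sin(\theta/2))^{\nu}$ of the fractional centered difference, the order-$p$ consistency built into (\ref{math4}) forces the coefficients $\vartheta_{s,p}^{(\nu)}$ to match the Taylor expansion of
\begin{equation*}
g_{\nu}(\theta)\;:=\;\Bigl(\tfrac{\theta/2}{\sin(\theta/2)}\Bigr)^{\nu}
\end{equation*}
up to order $\theta^{p-1}$; concretely, $\sum_{s}\vartheta_{s,p}^{(\nu)}e^{-is\theta} = g_{\nu}(\theta) + O(\theta^{p})$. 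Since $g_{\nu}$ is even in $\theta$ with $g_{\nu}(0)=1$, the symmetry half of (a) and the sum rule (e) follow immediately: evenness forces $\vartheta_{-s,p}^{(\nu)}=\vartheta_{s,p}^{(\nu)}$, and $\theta=0$ gives $\sum_{s}\vartheta_{s,p}^{(\nu)}=1$.

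Next, I would observe that only finitely many $\vartheta_{s,p}^{(\nu)}$ are nonzero (for $|s|\leq p/2-1$, as the tabulated cases show) and that, by evenness, the matching system reduces to a square Vandermonde-type system in $s^{2}$ with right-hand side given by the even Taylor coefficients $c_{2j}(\nu)$ of $g_{\nu}$. A short manipulation with the classical expansion $\log(\phi/\sin\phi)=\sum_{n\geq 1}\kappa_{n}\phi^{2n}$, whose coefficients $\kappa_{n}$ are positive (a Bernoulli-number identity), shows that $c_{2j}(\nu)$ is a positive linear combination of positive powers of $\nu$, and hence strictly positive for every admissible $\nu$. The bound $\vartheta_{0,p}^{(\nu)}\geq 1$ in (a) then falls out of (e) together with (b): indeed $\vartheta_{0,p}^{(\nu)}=1-2\sum_{s\geq 1}\vartheta_{s,p}^{(\nu)}$, and once alternation is established the tail sum is nonpositive.

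The monotonicity statements (c) and (d) I would extract by comparing adjacent columns of the inverse Vandermonde (for (c), at fixed $p$) and by noting that passing from order $p$ to order $p+2$ only introduces a correction proportional to the next (small) Taylor coefficient $c_{p}(\nu)$, which after inversion produces smaller entries at every existing index $s$, yielding (d). Property (f) is then an Abel-summation consequence: using (e), write
\begin{equation*}
\sum_{s=-m+n}^{n}\vartheta_{s,p}^{(\nu)}\;=\;1-\sum_{s>n}\vartheta_{s,p}^{(\nu)}-\sum_{s<-m+n}\vartheta_{s,p}^{(\nu)},
\end{equation*}
and apply (b) and (c) so that each tail sum is an alternating series of decreasing magnitudes, hence dominated in absolute value by its leading term, giving strict positivity after a short case analysis on whether $n$ and $-m+n$ are inside or outside the nonzero range.

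The main obstacle is property (b): proving uniformly in $s$ and $p$ that $\mathrm{sign}(\vartheta_{s,p}^{(\nu)})=(-1)^{s}$. This requires controlling how the entries of the inverse Vandermonde in $1^{2},2^{2},\dots,(p/2-1)^{2}$ combine with the positive Taylor coefficients $c_{2j}(\nu)$ to produce the strict alternation. I expect this is most cleanly handled either by an explicit closed form (in terms of Stirling-type numbers) or by an induction on $p$ in which the sign of the new correction is tracked carefully at each step, with the tabulated cases $p=2,4,\dots,12$ serving as the base; this is the technical heart of the lemma, and once (b) is in hand the remaining parts (a)--(f) follow by the routes above.
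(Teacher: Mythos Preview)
The paper itself gives no proof of this lemma at all: after listing the explicit values of $\vartheta_{s,p}^{(\nu)}$ for $p=2,4,6,8,10,12$, it simply announces that ``the features of the new coefficients \ldots are achieved from the rigorous scrutiny of these coefficients'' and then states the lemma. In other words, the paper's ``proof'' is a bare inspection of the six tabulated cases, with no argument offered for general even $p$. Your generating-function route therefore already goes well beyond what the paper provides: the identification of $\sum_{s}\vartheta_{s,p}^{(\nu)}e^{-is\theta}$ with the degree-$(p-2)$ even Taylor polynomial of $g_{\nu}(\theta)=(\tfrac{\theta/2}{\sin(\theta/2)})^{\nu}$ is the right structural observation, and it immediately yields the symmetry in (a) and the sum rule (e), which the paper does not justify at all.

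That said, your proposal does contain genuine gaps, and you are candid about the main one. For (b) you correctly flag that the alternation of signs is the heart of the matter and only sketch two possible attacks; neither is carried out, and the inverse-Vandermonde entries in the nodes $1^{2},2^{2},\ldots$ do alternate in sign columnwise, but combining them with the positive $c_{2j}(\nu)$ to get a \emph{uniform} sign in $s$ is not automatic and needs a real argument. Your treatments of (c) and (d) are also too loose: ``comparing adjacent columns of the inverse Vandermonde'' and ``the correction after inversion produces smaller entries at every existing index'' are plausible heuristics, not proofs, and in fact (d) as stated compares coefficients across different linear systems of different sizes, so the ``small correction'' picture has to be made precise. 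Your derivation of (f) from (b), (c), (e) via an alternating-tail estimate is sound once those are in hand. In summary: relative to the paper you have supplied strictly more, and the easy parts (symmetry, (e), and (f) conditional on (b)+(c)) are fine; but (b), (c), (d) remain open in your write-up, just as they are in the paper.
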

\begin{theorem}\label{Thm95}
\cite{Young1972} \textit{If} A \textit{is a real symmetric matrix with non-negative diagonal elements which is irreducible and has weak diagonal dominance, then} A \textit{is positive definite.}
\end{theorem}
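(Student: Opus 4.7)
The plan is to reduce positive definiteness to showing all eigenvalues of $A$ are strictly positive, which I would then establish in two stages: first a Gershgorin-type argument giving non-negativity, then a separate irreducibility argument ruling out $0$.

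First, since $A$ is real and symmetric, its spectrum is real, so positive definiteness is equivalent to $\lambda(A) > 0$ for every eigenvalue. Applying the Gershgorin circle theorem, every eigenvalue $\lambda$ lies in some disk $|\lambda - a_{ii}| \leq R_i$ where $R_i = \sum_{j\neq i}|a_{ij}|$. The weak diagonal dominance hypothesis says $a_{ii} \geq R_i$, and the non-negativity of the diagonal says $a_{ii} \geq 0$, so each such disk is contained in $[0, 2a_{ii}]$. Hence $\lambda \geq 0$.

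The remaining step is to exclude $\lambda = 0$. Suppose for contradiction that $Av = 0$ with $v\neq 0$, and let $S = \{i : |v_i| = \|v\|_{\infty}\}$. For $i\in S$, the identity $a_{ii}v_i = -\sum_{j\neq i}a_{ij}v_j$ together with $|v_j|\leq |v_i|$ gives
\begin{equation*}
a_{ii}|v_i| \leq \sum_{j\neq i}|a_{ij}|\,|v_j| \leq R_i|v_i| \leq a_{ii}|v_i|,
\end{equation*}
so all inequalities are equalities. In particular, whenever $a_{ij}\neq 0$ we must have $|v_j|=|v_i|$, i.e.\ $j\in S$ as well. Irreducibility of $A$ means that for any $i\in S$ and any $k$ there is a chain of nonzero entries $a_{i i_1}, a_{i_1 i_2},\ldots, a_{i_r k}$ linking them; iterating the closure property just proved along this chain forces $k\in S$. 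Since $S$ is nonempty, it must equal the whole index set.

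Finally, weak diagonal dominance of the assumed kind (i.e.\ strict at at least one row $i_0$, as used implicitly in the definition of irreducibly diagonally dominant matrices) yields $a_{i_0 i_0} > R_{i_0}$. Because $i_0\in S$, the chain of equalities above forces $a_{i_0 i_0}|v_{i_0}| = R_{i_0}|v_{i_0}|$, contradicting $a_{i_0 i_0} > R_{i_0}$ since $|v_{i_0}|>0$. Hence $0$ is not an eigenvalue, and combined with the Gershgorin step every eigenvalue of $A$ is strictly positive, proving $A$ is positive definite. The main delicate point is the propagation step using irreducibility: one has to be careful that the closure property ``$a_{ij}\neq 0$ and $i\in S$ implies $j\in S$'' really does spread to the strict-inequality row $i_0$, which is exactly why the irreducibility hypothesis (and not merely weak dominance alone) is indispensable.
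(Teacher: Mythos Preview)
The paper does not supply a proof of this theorem; it is quoted verbatim as a cited result from Young and Gregory and is invoked only as a black box in the stability argument. Consequently there is no paper proof to compare your attempt against.

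Your argument is the standard one and is essentially correct: Gershgorin gives $\lambda\geq 0$, and the propagation of the maximum-modulus set $S$ via irreducibility forces equality $a_{ii}=R_i$ in every row, contradicting the existence of a strictly dominant row. One point worth flagging explicitly is the one you yourself hedge on: the result is false if ``weak diagonal dominance'' is read as $|a_{ii}|\geq R_i$ for all $i$ with no strict row (the graph Laplacian $L$ of any connected graph is real symmetric, irreducible, has non-negative diagonal, satisfies $a_{ii}=R_i$ everywhere, and has $L\mathbf{1}=0$). So the theorem as stated must be understood in the Varga/Young sense of irreducible diagonal dominance, i.e.\ weak dominance together with strict inequality in at least one row. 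You use exactly this in your final contradiction, so your proof is sound under that reading, but since the paper's statement does not make the strictness explicit, it would strengthen your write-up to state this hypothesis up front rather than introduce it parenthetically at the end.
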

\begin{lemma}\label{Thm97}
The transformation
\begin{eqnarray*}
\varphi(z)=\frac{120-60z+12z^2-z^3}{120+60z+12z^2+z^3}
\end{eqnarray*}
    maps the right half of the complex plane onto inner unit disk.
\end{lemma}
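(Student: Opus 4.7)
The plan is to exploit the symmetric structure $\varphi(z)=P(-z)/P(z)$, where $P(z)=z^3+12z^2+60z+120$ is a polynomial with real coefficients, and reduce everything to the maximum modulus principle.

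First, I would show that every root of $P$ lies in the open left half-plane $\{\operatorname{Re}(z)<0\}$. The cleanest way is to apply the Routh--Hurwitz criterion: form the Routh array from the coefficients $(1,12,60,120)$ and check that its first column $(1,\,12,\,50,\,120)$ is strictly positive. This guarantees that $\varphi$ is holomorphic on the closed right half-plane $\{\operatorname{Re}(z)\geq 0\}$, so no issues of poles arise.

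Second, I would establish that $|\varphi|\equiv 1$ on the boundary of the right half-plane and at infinity. Since $P$ has real coefficients, $\overline{P(z)}=P(\overline{z})$; in particular, for $z=iy$ we have $P(-iy)=\overline{P(iy)}$, and therefore $|\varphi(iy)|=1$. Moreover, comparing leading coefficients gives $\lim_{|z|\to\infty}\varphi(z)=-1$, so $|\varphi(z)|\to 1$ uniformly as $|z|\to\infty$.

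Third, I would conclude by a standard exhaustion argument. For any $R>0$ large, consider the bounded domain $D_R=\{|z|\leq R,\ \operatorname{Re}(z)\geq 0\}$. On the imaginary segment $|\varphi|=1$ exactly, while on the semicircular arc $|\varphi|\leq 1+\varepsilon(R)$ with $\varepsilon(R)\to 0$ as $R\to\infty$. The maximum modulus principle on $D_R$ then yields $|\varphi(z)|\leq 1+\varepsilon(R)$ for every interior point, and passing to the limit gives $|\varphi(z)|\leq 1$ on $\{\operatorname{Re}(z)>0\}$. Since $\varphi$ is non-constant, the open mapping theorem (equivalently, the strict form of the maximum principle) upgrades this to $|\varphi(z)|<1$ in the open half-plane. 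For the surjectivity part of the claim, observe that $\varphi$ is a rational map of degree $3$; by the argument principle, $\varphi(z)=w$ has exactly three right half-plane preimages for every $w$ in the open unit disk (starting from $w=0$, whose preimages are the negatives of the zeros of $P$, all of which lie in $\operatorname{Re}(z)>0$ by step one, and deforming $w$ within the disk without crossing the boundary).

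The main obstacle is really a bookkeeping one: the maximum principle must be applied to an unbounded domain, which is why the explicit limit $\varphi(\infty)=-1$ in step two is crucial; without it the semicircular exhaustion in step three would fail. A direct verification of the inequality $|P(z)|^2>|P(-z)|^2$ for $\operatorname{Re}(z)>0$ is possible by writing $z=x+iy$ and expanding, but it produces an unwieldy polynomial identity in $x,y$, whereas the half-plane maximum principle route keeps the argument conceptual.
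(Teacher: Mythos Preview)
Your argument is correct, but the route is quite different from the paper's. The paper does precisely the ``direct verification'' you mention and set aside at the end: it writes $z=a+ib$, expands $|P(z)|^2-|P(-z)|^2$, and observes that the cross terms collapse to
\[
48a\bigl[600+70a^{2}+30b^{2}+(a^{2}+b^{2})^{2}\bigr]>0\quad\Longleftrightarrow\quad a>0,
\]
which is five lines rather than an unwieldy identity. (The paper's printed version drops the $30b^{2}$ term, a harmless typo.) Your approach via Routh--Hurwitz plus the maximum modulus principle is more conceptual and has the advantage of generalising verbatim to any diagonal Pad\'{e} approximant $R_{n,n}(-z)=P_n(-z)/P_n(z)$, since the Pad\'{e} denominators are always Hurwitz; the paper's computation, by contrast, is specific to the $[3,3]$ case but entirely elementary and self-contained. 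Note also that the paper only establishes the biconditional $|\varphi(z)|<1\iff\operatorname{Re}(z)>0$ and does not address surjectivity onto the open disk; your step four goes beyond what the paper proves (and beyond what the stability theorem actually needs).
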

\begin{proof}
We assume $a>0$ for $z=a+ib$, $i=\sqrt{-1}$ and because of size of $z$, we have $a^{2}+b^{2}>0$
and also we are able to write by following manner in \cite{Ding2009600,Rahman2014264}
\begin{equation*}
48a[600+70a^2+(a^2+b^2)^2]>0,
\end{equation*}
to apply a few calculating on recently relation, the outcome is the following inequality
\begin{equation*}
14400(z+\overline{z})+240(z^3+\overline{z}^3)+1440\overline{z}z(z+\overline{z})+24(\overline{z}z)^2(z+\overline{z})>0,
\end{equation*}
then
\begin{equation*}
14400z+240z^3+14400\overline{z}+1440\overline{z}z^2+1440\overline{z}^2z+24\overline{z}^2z^3+
240\overline{z}^3+24\overline{z}^3z^2>0,
\end{equation*}
hence
\begin{equation*}
(120-60\overline{z}+12\overline{z}^2-\overline{z}^3)(120-60z+12z^2-z^3)<(120+60\overline{z}+
12\overline{z}^2+\overline{z}^3)(120+60z+12z^2+z^3),
\end{equation*}
therefore
\begin{equation*}
\mid \frac{120-60z+12z^2-z^3}{120+60z+12z^2+z^3}\mid <1,
\end{equation*}
this shows the prove of the lemma is finished.
\end{proof}

\begin{lemma}\label{Thm96}
\cite{Thomas1995} \textit{Suppose} $M$ \textit{is symmetric, then} $\rho(M)\leq 1+C \Delta t$ \textit{for some non-negative} $C$ \textit{is a necessary and sufficient condition for stability of difference scheme}
\begin{eqnarray*}
U_{k+1}=MU_{k}
\end{eqnarray*}
\textit{with respect to the} $\ell_{2,\Delta x}$ \textit{norm, where} $\rho(M)$ \textit{denotes the spectral radius of the matrix} $M$.
\end{lemma}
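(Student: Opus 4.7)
The plan is to reduce the stability question to a purely scalar/spectral one using the fact that symmetry makes the operator $2$-norm coincide with the spectral radius. Concretely, because $M=M^{\top}$ (with real entries, or $M^{\ast}=M$ over $\mathbb{C}$), the spectral theorem gives an orthonormal eigenbasis, and so $\|M\|_{2}=\rho(M)$ and more generally $\|M^{k}\|_{2}=\rho(M)^{k}$ for every integer $k\ge 0$. Recall that Lax--Richtmyer stability in the $\ell_{2,\Delta x}$ norm for the one-step scheme $U_{k+1}=MU_{k}$ is the assertion that there is a constant $K=K(T)$ with $\|M^{k}\|_{2}\le K$ for all $k,\Delta t$ satisfying $0\le k\Delta t\le T$. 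So the task is to show that this uniform bound on $\rho(M)^{k}$ is equivalent to $\rho(M)\le 1+C\Delta t$ for some $C\ge 0$ independent of $\Delta t$.

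For sufficiency, I would assume $\rho(M)\le 1+C\Delta t$ and then simply estimate
\begin{eqnarray*}
\|M^{k}\|_{2}=\rho(M)^{k}\le (1+C\Delta t)^{k}\le e^{Ck\Delta t}\le e^{CT},
\end{eqnarray*}
which supplies a uniform bound $K=e^{CT}$ and hence stability. This is the easy half and only uses the elementary inequality $1+x\le e^{x}$.

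For necessity, I would argue by contrapositive / direct estimation: assume stability, so $\rho(M)^{k}\le K$ for every integer $k$ with $k\Delta t\le T$. Choosing $k=\lfloor T/\Delta t\rfloor$ and taking $k$-th roots yields $\rho(M)\le K^{1/k}$. Writing $K^{1/k}=\exp\!\bigl(\tfrac{\ln K}{k}\bigr)$ and using $e^{x}\le 1+xe^{x}$ (or simply a Taylor expansion at $x=0$) together with $1/k\le 2\Delta t/T$ for $\Delta t$ small enough, I can absorb everything into a single constant $C$ depending only on $K$ and $T$, giving $\rho(M)\le 1+C\Delta t$.

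The only delicate step is the necessity direction, and the main obstacle is just making the quantitative passage from $K^{1/k}$ to the linear bound $1+C\Delta t$ clean: one must ensure the constant $C$ does not blow up as $\Delta t\to 0$, which is why the bound is phrased with $e^{x}\le 1+xe^{x}$ rather than a bare Taylor remainder. Everything else is a direct consequence of the spectral-theorem identity $\|M^{k}\|_{2}=\rho(M)^{k}$ for symmetric $M$, which is what makes the symmetry hypothesis essential (for non-normal $M$ the two quantities can differ dramatically and one only has the weaker Kreiss matrix theorem).
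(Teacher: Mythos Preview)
The paper does not supply its own proof of this lemma; it is quoted verbatim as a cited result from Thomas (1995), with no argument given. Your proposal is correct and is precisely the standard textbook proof: symmetry gives $\|M^{k}\|_{2}=\rho(M)^{k}$ via the spectral theorem, after which sufficiency is the one-line estimate $(1+C\Delta t)^{k}\le e^{CT}$ and necessity comes from taking $k$-th roots of the uniform stability bound and linearising $K^{1/k}$. There is therefore nothing in the paper to compare against; your argument is what one would find in the cited reference.
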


\section{Numerical scheme for RFADE}
\label{sec3}
\noindent In this section, we will obtain a new high order finite difference scheme for solving equation (\ref{eqnarray1804090938}) based upon matrix transform method.\\
Here we consider the domain of the problem, which includes space and time direction and we divide it via spatial and temporal step sizes. Let
\begin{eqnarray*}
x_{i}=ih, \quad i=0,1,2,...m, \quad t_{j}=kj, \quad j=0,1,2,...,n,
\end{eqnarray*}
where $h=\frac{L}{m}$ and $k=\frac{T}{n}$ are space and time steps, respectively. The values of the finite difference approximations of $u(x,t)$ at the grid are denoted by
\begin{eqnarray} \label{eqnarray1804091032}
u_{i,j}=u(x_{i},t_{j}).
\end{eqnarray}
Assume that $u(x,t)$ is sufficiently smooth function and replace the fractional partial derivatives stated in (\ref{eqnarray1804090938}) with respect to $x$ by the approximated formula (\ref{eqnarray1804091007})
\begin{eqnarray} \label{eqnarray1804091033}
\frac{\partial^{\alpha}u(x_{i},t)}{\partial
|x|^{\alpha}}=-h^{-\alpha}\sum_{s=-\infty}^{\infty}\vartheta_{s,p}^{(\alpha)}\sum_{k=-\infty}^{\infty}\omega_{k}^{(\alpha)}u(x_{i-k-s},t)+\mathcal{O}(h^{p}),\quad 1< \alpha \leq2 ,
\end{eqnarray}
and
\begin{eqnarray} \label{eqnarray1804091034}
\frac{\partial^{\beta}u(x_{i},t)}{\partial
|x|^{\beta}}=-h^{-\beta}\sum_{s=-\infty}^{\infty}\vartheta_{s,p}^{(\beta)}\sum_{k=-\infty}^{\infty}\omega_{k}^{(\beta)}u(x_{i-k-s},t)+\mathcal{O}(h^{p}),\quad 0< \beta < 1 ,
\end{eqnarray}
Let $u_{i}(t)=u(x_{i},t)$, for $i=1,2,...,m-1$, then the RSFADE (\ref{eqnarray1804090938}) can be cast into the following system of time ordinary differential equations by considering formulas (\ref{eqnarray1804091033}) and (\ref{eqnarray1804091034}) based on mesh sizes in spatial direction.
\begin{eqnarray}\label{eqnarray1804091035}
\frac{\partial u_{i}(t)}{\partial t}=-(\sum_{s=-m+i}^{i}\vartheta_{s,p}^{(\alpha)}\sum_{k=-m+i}^{i}\frac{\mathcal{K}_{\alpha} \omega_{k}^{(\alpha)}}{h^{\alpha}}+\sum_{s=-m+i}^{i}\vartheta_{s,p}^{(\beta)}\sum_{k=-m+i}^{i}\frac{\mathcal{K}_{\beta} \omega_{k}^{(\beta)}}{h^{\beta}})u_{i-k-s}(t),
\end{eqnarray}
Denote
\begin{eqnarray*}
U(t)=[u_{1}(t),u_{2}(t),...,u_{m-1}(t)]^{T},
\end{eqnarray*}
\begin{eqnarray*}
U_{0}=U(0)=[u_{1}(0),u_{2}(0),...,u_{m-1}(0)]^{T},
\end{eqnarray*}
then the equation (\ref{eqnarray1804091035}) can be rewritten as the following matrix form:
\begin{eqnarray}\label{eqnarray1804091036}
\left\{
  \begin{array}{l}
    \frac{dU(t)}{dt}=-(A^{(\alpha)}M^{(\alpha)}+A^{(\beta)}M^{(\beta)})U(t), \\
    U(0)=U_{0}.
  \end{array}
\right.
\end{eqnarray}
in which matrices $A^{(\alpha)}$, $M^{(\alpha)}$, $A^{(\beta)}$ and $M^{(\beta)}$ are defined the following form
\begin{eqnarray*}
A^{(\nu)}_{i,j}=\vartheta_{\mid
i-j\mid,p}^{(\nu)}, \quad \mbox{for} \quad \nu=\alpha,\beta, \quad \mbox{and} \quad i,j=1,2,...,m-1,
\end{eqnarray*}
\begin{eqnarray*}
M^{(\nu)}_{i,j}=\mathcal{K}_{\nu}h^{-\nu} \omega_{\mid
i-j\mid}^{(\nu)},\quad \mbox{for} \quad \nu=\alpha,\beta,  \quad \mbox{and} \quad i,j=1,2,...,m-1.
\end{eqnarray*}
With respect to Lemma \ref{Thm99} and  Lemma \ref{Thm98}, these follow that the diagonal entry of all four matrices are positive and also these matrix are symmetric and strictly diagonally dominant. Therefore, these matrices are symmetric positive definite and so we'll conclude the matrix $A^{(\alpha)}M^{(\alpha)}+A^{(\beta)}M^{(\beta)}$ is also a symmetric positive definite matrix.\\

Let $S=A^{(\alpha)}M^{(\alpha)}+A^{(\beta)}M^{(\beta)}$. Due to being symmetric positive definite of the Matrix $S$, the exact solution of the equation (\ref{eqnarray1804091036}) can be written as follow
\begin{eqnarray*}
U(t)=\exp(-t S)U_{0},
\end{eqnarray*}
By rewriting the exact answer for the time steps $t_{j}$ and $t_{j+1}$, the following formulas are obtained
\begin{eqnarray*}
U(t_{j+1})=\exp(-(j+1)k S)U_{0},
\end{eqnarray*}
and
\begin{eqnarray*}
U(t_{j})=\exp(-(j)k S)U_{0},
\end{eqnarray*}
Thus, we can obtain a recurrence formula for solving equation (\ref{eqnarray1804091036}) by emerging the two last formulas
\begin{eqnarray}\label{eqnarray1804091043}
U(t_{j+1})=\exp(-k S)U(t_{j}).
\end{eqnarray}
Now we approximate $\exp(-z)$ by using $[3,3]$ Pad\'{e} approximation  \cite{Zhang2014109}
\begin{eqnarray}\label{eqnarray202104270146}
\exp(-z)=\frac{120-60z+12z^2-z^3}{120+60z+12z^2+z^3}+\mathcal{O}(z^{7}).
\end{eqnarray}
equation (\ref{eqnarray1804091043}) takes the following form with the aid of above cited Pad\'{e} approximation scheme
\begin{eqnarray}\label{eqnarray1804091044}
U(t_{j+1})=(120+60k S+12(k S)^2+(k S)^3)^{-1}(120-60k S+12(k S)^2-(k S)^3)U(t_{j}).
\end{eqnarray}
According to Formula (\ref{eqnarray202104270146}), the local error in each time step is of the order of seven. Therefore, the total error for time calculations (for $n$ time steps) will be of the order of six.
\section{Stability analysis for numerical method}
\label{sec4}
In this section, we prove the unconditional stability of the numerical method.
\noindent \begin{theorem}
The iterative scheme defined by (\ref{eqnarray1804091044}) to solve the RFADE (\ref{eqnarray1804090938})-(\ref{eqnarray1804090940}) is unconditionally stable.
\end{theorem}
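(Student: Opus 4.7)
The plan is to show that the iteration matrix in (\ref{Current3:7}) has spectral radius strictly below one, and then to invoke Lemma \ref{Thm96}. First I would rewrite the scheme as $U(t_{j+1}) = \varphi(kS)\,U(t_j)$ with $\varphi$ the rational function of Lemma \ref{Thm97}. Since $S = A^{(\alpha)}M^{(\alpha)} + A^{(\beta)}M^{(\beta)}$ has already been identified as symmetric positive definite in Section 3, $kS$ is symmetric positive definite for every $k>0$; in particular the denominator polynomial $120+60z+12z^2+z^3$ has no nonnegative real roots, so $\varphi(kS)$ is well defined, and as a rational function of a symmetric matrix it is itself symmetric.

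Next I would diagonalize $S = Q\Lambda Q^{T}$ with $\Lambda = \operatorname{diag}(\lambda_1,\dots,\lambda_{m-1})$ and $\lambda_i > 0$, so that
\begin{equation*}
\varphi(kS) \;=\; Q\,\varphi(k\Lambda)\,Q^{T},
\end{equation*}
and the eigenvalues of the iteration matrix are exactly $\varphi(k\lambda_i)$. Each $k\lambda_i$ lies in the open right half plane, so Lemma \ref{Thm97} gives $|\varphi(k\lambda_i)|<1$ for all $i$, whence $\rho(\varphi(kS)) < 1$. Symmetry of $\varphi(kS)$ together with $\rho \le 1$ (trivially $\rho \le 1 + C\Delta t$ with $C=0$) puts the scheme in the setting of Lemma \ref{Thm96}, yielding unconditional stability in the $\ell_{2,\Delta x}$ norm with no restriction linking $k$ and $h$.

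The main obstacle I foresee is not in the stability argument itself but in the lemma-chain that $S$ is symmetric positive definite. A product of two symmetric matrices need not be symmetric, so before quoting Section 3 I would want to verify that $A^{(\nu)}$ and $M^{(\nu)}$ commute; this is natural because each is a symmetric Toeplitz matrix whose entries depend only on $|i-j|$, and symmetric Toeplitz matrices of the same structure commute up to boundary effects that vanish under the given symmetry. Once commutativity is in hand, $A^{(\nu)}M^{(\nu)}$ is symmetric and, by Theorem \ref{Thm95} applied via Lemmas \ref{Thm99} and \ref{Thm98} (nonnegative diagonal, weak diagonal dominance, irreducibility), positive definite, so the sum $S$ inherits the same properties. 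With that in place, the rest of the proof is a direct assembly: reduce to eigenvalues of $kS$, apply Lemma \ref{Thm97} pointwise, and conclude by Lemma \ref{Thm96}.
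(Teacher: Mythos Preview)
Your argument is essentially the paper's own proof: once $S$ is taken as symmetric positive definite, you reduce to the eigenvalues $k\lambda_i>0$, apply Lemma~\ref{Thm97} pointwise to get $\rho(\varphi(kS))<1$, and finish with Lemma~\ref{Thm96}. The paper proceeds identically (it does not even pause over the well-definedness of the inverse or the symmetry of the iteration matrix), so your extra caution about the commutativity of $A^{(\nu)}$ and $M^{(\nu)}$ goes beyond what the paper supplies---the paper simply asserts that $S$ is symmetric positive definite in Section~\ref{sec3} and uses it.
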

\begin{proof}
According to the being symmetric positive definite of matrix $S$ and positivity of the diagonal entry of the matrix $S$, we consequence based on Theorem \ref{Thm95} that all of the eigenvalues of matrix $S$ have positive real parts.
Let
\begin{eqnarray*}
M=(120+60k S+12(k S)^2+(k S)^3)^{-1}(120-60k S+12(k S)^2-(k S)^3),
\end{eqnarray*}
The spectral radius of the matrix $M$ is given by
\begin{eqnarray*}
\rho(M)=\max \mid \mu_{i} \mid, \quad i=1,2,...,m-1,
\end{eqnarray*}
where $\mu_{i}$ are the eigenvalues of the matrix
\begin{eqnarray*}
(120+60k S+12(k S)^2+(k S)^3)^{-1}(120-60k S+12(k S)^2-(k S)^3).
\end{eqnarray*}
It is easy to conclude that the eigenvalues of the matrix \cite{Hoffman1961}
\begin{eqnarray*}
(120+60k S+12(k S)^2+(k S)^3)^{-1}(120-60k S+12(k S)^2-(k S)^3),
\end{eqnarray*}
are given by
\begin{eqnarray*}
\mu_{i}=\frac{120-60k\lambda_{i}(S)+12k^2(\lambda_{i}(S))^2-k^3(\lambda_{i}(S))^3}{120+60k\lambda_{i}(S)+12k^2(\lambda_{i}(S))^2+k^3(\lambda_{i}(S))^3},\quad i=1,2,...,m-1,
\end{eqnarray*}
by using the Lemma \ref{Thm97} and being positive of the $\lambda_{i}(S)$, we have
\begin{eqnarray*}
\mid \mu_{i} \mid < 1, \quad i=1,2,...,m-1.
\end{eqnarray*}
It follows from above mentioned inequality that the spectral radius of matrix $M$ is smaller than 1. Hence, based on the Lemma \ref{Thm96} the iterative scheme (\ref{eqnarray1804091044}) is unconditionally stable.
\end{proof}
\section{Numerical examples of the RFADE}
\label{sec5}
\noindent
In order to verify the validity of the theoretical topics mentioned in this article, the maximum error and the approximate convergence rate are considered for the following two examples. For this purpose, the necessary formulas have been defined.
We consider $L_{2}$ norm resembling as the device for error between the analytical solution and approximate solution showing via $e(h,k)$. The experimental convergence order in spatial direction $\mathcal{R}_{s}(h,k)$ computed by the formula
\begin{eqnarray*}
\mathcal{R}_{s}(h,k)={\log \frac{e(2h,k)}{e(h,k)}}/{\log 2}, \quad \mbox{for small values}\quad k
\end{eqnarray*}
and another one in temporal direction $\mathcal{R}_{t}(h,k)$ calculated by the formula
\begin{eqnarray*}
\mathcal{R}_{t}(h,k)={\log \frac{e(h,2k)}{e(h,k)}}/{\log 2}, \quad \mbox{for small values}\quad h
\end{eqnarray*}

\begin{example}
\label{exp:1}
Consider the following Riesz space fractional advection-dispersion equation
\begin{eqnarray}\label{eqnarray1804091055}
\frac{\partial u(x,t)}{\partial
t}=\mathcal{K}_{\alpha}\frac{\partial^{\alpha}u(x,t)}{\partial
|x|^{\alpha}}+\mathcal{K}_{\beta}\frac{\partial^{\beta}u(x,t)}{\partial
|x|^{\beta}},\quad 0<x<\pi,\quad t>0,
\end{eqnarray}
associated with the initial value and zero Dirichlet boundary conditions
\begin{eqnarray}\label{eqnarray1804091056}
u(x,0)=x^2(\pi-x),\quad 0<x<\pi,
\end{eqnarray}
\begin{eqnarray}\label{eqnarray1804091057}
u(0,t)=u(\pi,t)=0,\quad 0\leq t\leq T.
\end{eqnarray}
With using separation of variables method, we obtained eigenvalue $\lambda_{n}=n$ and eigenfunction $\varphi_{n}(x)=sin(n x)$ for $n=1,2,...$ due to Dirichlet boundary conditions and hence the analytical solution of equations (\ref{eqnarray1804091055})-(\ref{eqnarray1804091057}) would be at following formula:
\begin{eqnarray*}
u(x,t)=\sum_{n=1}^{\infty}[\frac{8}{n^{3}}(-1)^{n+1}-\frac{4}{n^{3}}]sin(n x)\exp(-[\mathcal{K}_{\alpha}(n^{2})^{\frac{\alpha}{2}}+\mathcal{K}_{\beta}(n^{2})^{\frac{\beta}{2}}]t),
\end{eqnarray*}
In this example, the approximation formulas (\ref{eqnarray1804091033}) and (\ref{eqnarray1804091034}) for the $p=6$ are used for the approximation of the Riesz fractional derivative of order $\alpha$ and $\beta$, respectively, and the $[3,3]$ Pad\'{e} approximation is presented for the solution of the time ODE obtained from the previous approximation. Here, $\mathcal{K}_{\alpha}=\mathcal{K}_{\beta}=0.25$.
\begin{table}[htbp]
\caption{\label{tab:1}Maximum errors and corresponding rates for solving RSFADE (\ref{eqnarray1804091055})-(\ref{eqnarray1804091057}) with $\mathit{k}= 0.001$}
\centering
\fbox{%
\begin{tabular}{c|cc|cc}
& \multicolumn{2}{c|}{$\alpha=1.8$, $\beta=0.9$} & \multicolumn{2}{c}{$\alpha=1.6$, $\beta=0.7$} \\
$\mathit{h}$ & Max Error & Error Rate & Max Error & Error Rate\\ \hline
$0.10000\pi$ & $9.10145E-04$ & - & $8.44135E-04$ & - \\
$0.05000\pi$ & $1.91731E-05$ & 5.56894 & $1.83810E-05$ & 5.52119 \\
$0.02500\pi$ & $3.99604E-07$ & 5.58437 & $3.66746E-07$ & 5.64729 \\
$0.01250\pi$ & $7.83920E-09$ & 5.67172 & $7.02208E-09$ & 5.70674 \\
$0.00625\pi$ & $3.26552E-11$ & 5.81562 & $1.26130E-10$ & 5.79892
\end{tabular}}
\end{table}
Table \ref{tab:1}. shows the magnitude of the maximum error and estimated convergence order, at time $t = 1.0$, between the analytical solution and the numerical solution obtained by the proposed method for solving RSFADE (\ref{eqnarray1804091055})-(\ref{eqnarray1804091057}) in which considered different order values such as $\alpha=1.8$, $\beta=0.9$ and $\alpha=1.6$, $\beta=0.7$ with halved spatial step sizes and $k=0.001$. Table \ref{tab:1}. demonstrates that the convergence rate of error in the spatial direction is approximately equal to sixth.

\begin{table}[htbp]
\caption{\label{tab:2}Maximum errors and corresponding rates for solving RSFADE (\ref{eqnarray1804091055})-(\ref{eqnarray1804091057}) with $\mathit{h}= 0.001\pi$}
\centering
\fbox{%
\begin{tabular}{c|cc|cc}
& \multicolumn{2}{c|}{$\alpha=1.8$, $\beta=0.9$} & \multicolumn{2}{c}{$\alpha=1.6$, $\beta=0.7$} \\
$\mathit{k}$ & Max Error & Error Rate & Max Error & Error Rate\\ \hline
$0.10000$ & $4.62484E-05$ &    -    & $3.52977E-05$ &    -      \\
$0.05000$ & $9.67009E-07$ & 5.57973 & $7.59453E-07$ &  5.53847  \\
$0.02500$ & $1.93074E-08$ & 5.64630 & $1.54910E-08$ &  5.61546  \\
$0.01250$ & $3.69215E-10$ & 5.70855 & $3.10036E-10$ &  5.64285  \\
$0.00625$ & $6.45402E-12$ & 5.83812 & $5.65678E-12$ &  5.77631
\end{tabular}}
\end{table}

Table \ref{tab:2}. displays the magnitude of the maximum error and estimated convergence order, at time $t = 1.0$, between the analytical solution and the numerical solution obtained by the proposed method for solving RSFADE (\ref{eqnarray1804091055})-(\ref{eqnarray1804091057}), where $\alpha$ and $\beta$ pairs are corresponding to two distinct values: $\alpha=1.8$, $\beta=0.9$ and $\alpha=1.6$, $\beta=0.7$ with halved temporal step sizes and $h=0.001\pi$. Table \ref{tab:2}. demonstrates that the convergence rate of error in the temporal direction is approximately equal to sixth.

\begin{figure}
\begin{center}
\mbox{\includegraphics[height=2.85in]{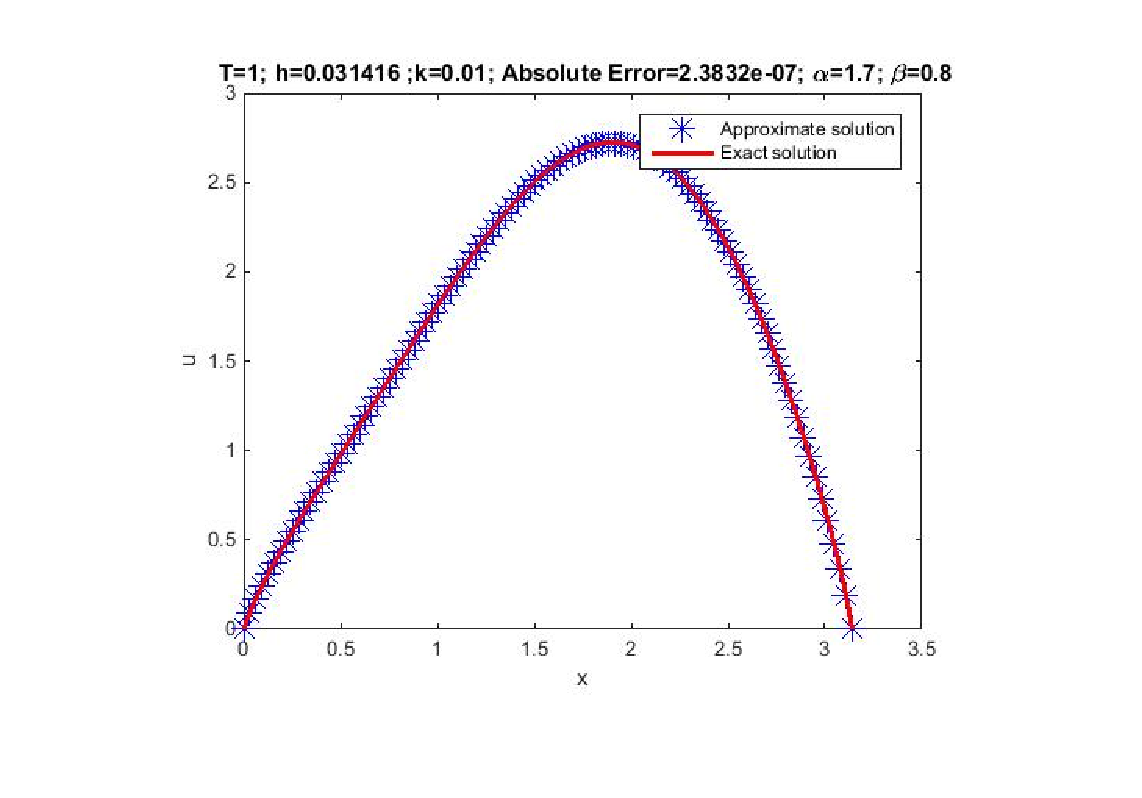}}
\end{center}
\caption{The comparison of the numerical solution and analytical solution of RSFADE (\ref{eqnarray1804091055})-(\ref{eqnarray1804091057})}
\label{fig:5-1-2}
\end{figure}

Figure \ref{fig:5-1-2} presents the comparison of the numerical solution obtained by mixed high order fractional centered difference scheme with $[3,3]$ Pad\'{e} approximation method and analytical solution with $h=\frac{\pi}{100}, k=0.01$ for $\alpha=1.7$ and $\beta=0.8$. The graph corresponding to the approximate solution and the analytical solution is overlapping.\\

\begin{figure}
\begin{center}
\mbox{\includegraphics[height=1.77in]{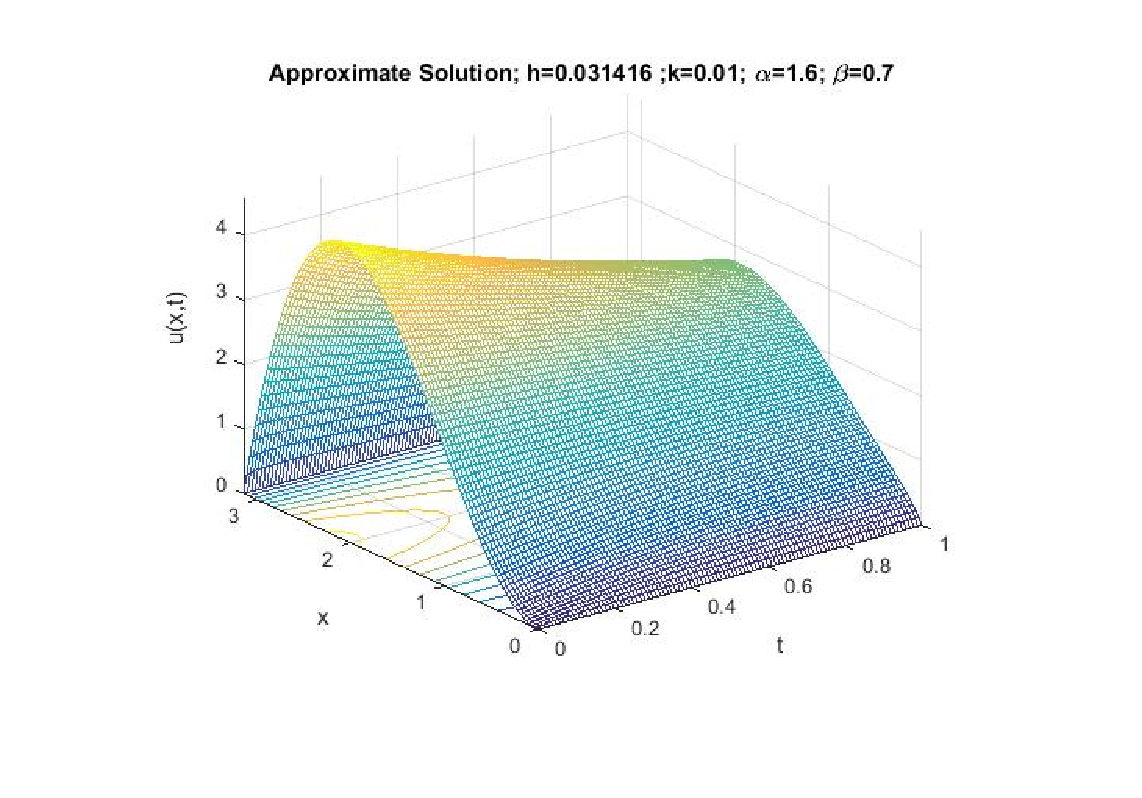}}
\mbox{\includegraphics[height=1.77in]{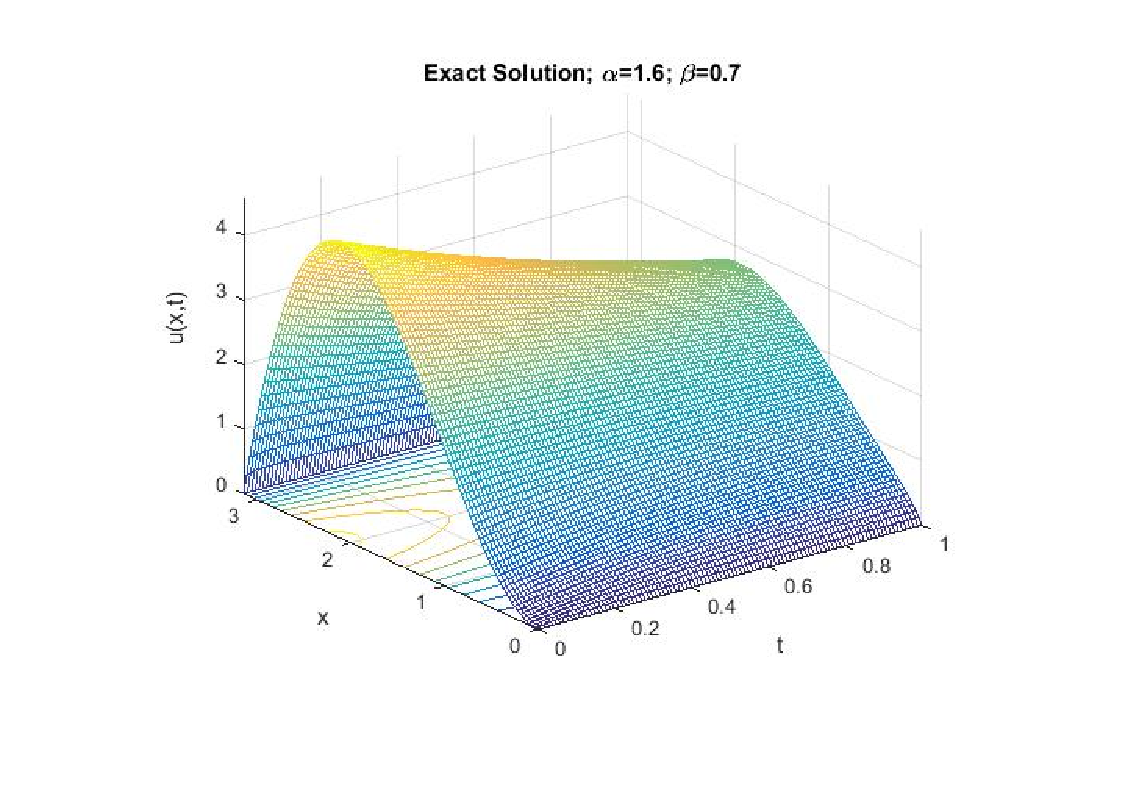}}
\end{center}
\caption{3D plot of the numerical solution and analytical solution for RSFADE (\ref{eqnarray1804091055})-(\ref{eqnarray1804091057})}
\label{fig:5-1-3}
\end{figure}

Figure \ref{fig:5-1-3} displays the approximate solution (Left) and analytical solution (Right) profiles of RSFADE (\ref{eqnarray1804091055})-(\ref{eqnarray1804091057}) over space for $0<t<1$ when $\alpha=1.6$, $\beta=0.7$. Figures \ref{fig:5-1-2} and \ref{fig:5-1-3}, and Tables \ref{tab:1} and \ref{tab:2} illustrate that the numerical experiments are in excellent agreement with analytical results.
\end{example}

\begin{example}
\label{exp:2}
Consider the following Riesz space fractional advection-dispersion equation
\begin{eqnarray}\label{eqnarray1804091134}
\frac{\partial u(x,t)}{\partial
t}=\mathcal{K}_{\alpha}\frac{\partial^{\alpha}u(x,t)}{\partial
|x|^{\alpha}}+\mathcal{K}_{\beta}\frac{\partial^{\beta}u(x,t)}{\partial
|x|^{\beta}},\quad 0<x<1,\quad t>0,
\end{eqnarray}
associated with the initial value and zero Dirichlet boundary conditions
\begin{eqnarray}\label{eqnarray1804091135}
u(x,0)=x(1-x),\quad 0<x<1,
\end{eqnarray}
\begin{eqnarray}\label{eqnarray1804091136}
u(0,t)=u(1,t)=0,\quad 0\leq t\leq T.
\end{eqnarray}
With using separation of variables method, we reached eigenvalues $\lambda_{n}=(2n-1)\pi$ and eigenfunctions $\varphi_{n}(x)=sin((2n-1)\pi x)$ for $n=1,2,...$ because of Dirichlet boundary conditions and hence the analytical solution of equations (\ref{eqnarray1804091134})-(\ref{eqnarray1804091136}) would be following the series:
\begin{eqnarray*}
u(x,t)=\sum_{n=1}^{\infty}\frac{8}{((2n-1)\pi)^{3}}sin((2n-1)\pi x)\exp(-[\mathcal{K}_{\alpha}(((2n-1)\pi)^{2})^{\frac{\alpha}{2}}+\mathcal{K}_{\beta}(((2n-1)\pi)^{2})^{\frac{\beta}{2}}]t),
\end{eqnarray*}
In this example, the approximation formulas (\ref{eqnarray1804091033}) and (\ref{eqnarray1804091034}) for the $p=4$ are applied for the approximation of the Riesz fractional derivative of order $\alpha$ and $\beta$ respectively, and the $[3,3]$ Pad\'{e} approximation is considered for the solution of the time ODE obtained from the previous approximation. Here, $\mathcal{K}_{\alpha}=\mathcal{K}_{\beta}=0.25$.
\begin{table}[htbp]
\caption{\label{tab:3}Maximum errors and corresponding rates for solving RSFADE (\ref{eqnarray1804091134})-(\ref{eqnarray1804091136}) with $\mathit{k}= 0.001$}
\centering
\fbox{%
\begin{tabular}{c|cc|cc}
& \multicolumn{2}{c|}{$\alpha=1.8$, $\beta=0.9$} & \multicolumn{2}{c|}{$\alpha=1.7$, $\beta=0.8$} \\
$\mathit{h}$ & Max Error & Error Rate & Max Error & Error Rate\\ \hline
$0.10000$ & $1.39882E-03$ &    -    & $1.44234E-03$ &    -     \\
$0.05000$ & $1.11466E-04$ & 3.64953 & $1.31934E-04$ & 3.45052  \\
$0.02500$ & $8.44864E-06$ & 3.72174 & $1.15595E-05$ & 3.51267  \\
$0.01250$ & $5.74142E-07$ & 3.87924 & $8.58159E-07$ & 3.75169  \\
$0.00625$ & $3.76350E-08$ & 3.93126 & $5.80712E-08$ & 3.88535
\end{tabular}}
\end{table}

Table \ref{tab:3}. gives details of the maximum error and approximated convergence order, at time $t = 1.0$, between the analytical solution and the numerical solution obtained by the suggested method for solving RSFADE (\ref{eqnarray1804091134})-(\ref{eqnarray1804091136}) for different order values $\alpha=1.8$, $\beta=0.9$ and $\alpha=1.7$, $\beta=0.8$ with halved spatial step sizes and $k=0.001$. Table \ref{tab:3}. confirms that the convergence rate of error in the spatial direction is numerically equal to fourth.\\

\begin{table}[htbp]
\caption{\label{tab:4}Maximum errors and corresponding rates for solving RSFADE (\ref{eqnarray1804091134})-(\ref{eqnarray1804091136}) with $\mathit{h}= 0.001$}
\centering
\fbox{%
\begin{tabular}{c|cc|cc}
& \multicolumn{2}{c|}{$\alpha=1.8$, $\beta=0.9$} & \multicolumn{2}{c|}{$\alpha=1.7$, $\beta=0.8$} \\
$\mathit{k}$ & Max Error & Error Rate & Max Error & Error Rate\\ \hline
$0.10000$ & $4.82904E-04$ &    -    & $4.66929E-04$ &    -     \\
$0.05000$ & $9.69414E-06$ & 5.63848 & $1.03693E-05$ & 5.49281  \\
$0.02500$ & $1.84807E-07$ & 5.71302 & $2.04733E-07$ & 5.66243  \\
$0.01250$ & $3.32526E-09$ & 5.79641 & $3.93496E-09$ & 5.70125  \\
$0.00625$ & $5.48630E-11$ & 5.92149 & $6.87941E-11$ & 5.83792
\end{tabular}}
\end{table}

Table \ref{tab:4}. exhibits the maximum error and experimented convergence rate, at time $t = 1.0$, between the analytical solution and the numerical solution obtained by the proposed method for solving RSFADE (\ref{eqnarray1804091134})-(\ref{eqnarray1804091136}), where $\alpha$ and $\beta$ pairs are corresponding to two distinct values: $\alpha=1.8$, $\beta=0.9$ and $\alpha=1.7$, $\beta=0.8$ with halved temporal step sizes and $h=0.001$. Table \ref{tab:4}. verifies that the convergence rate of error in the temporal direction is approximately equal to sixth.

\begin{figure}
\begin{center}
\mbox{\includegraphics[height=2.85in]{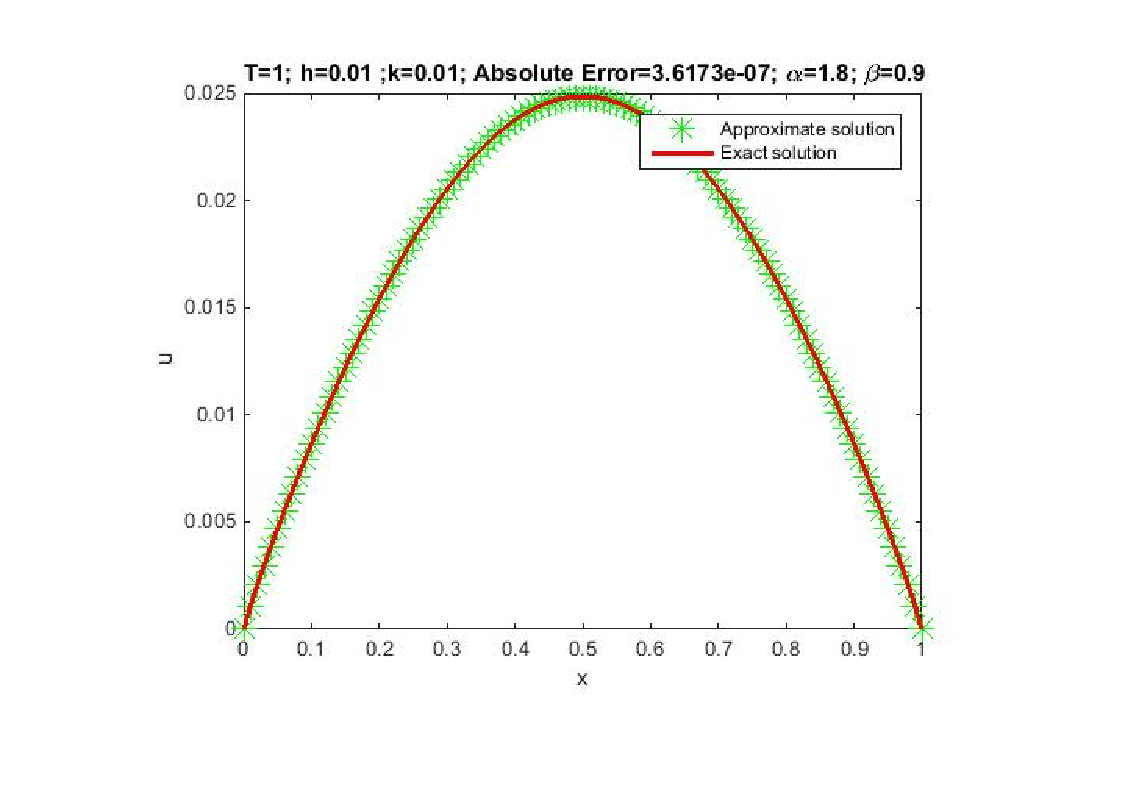}}
\end{center}
\caption{The comparison of the numerical solution and analytical solution of RSFADE (\ref{eqnarray1804091134})-(\ref{eqnarray1804091136})}
\label{fig:5-2-1}
\end{figure}

Figure \ref{fig:5-2-1} depicts the comparison of the numerical solution obtained using mixed high order fractional centered difference scheme with $[3,3]$ Pad\'{e} approximation method and analytical solution with $h=0.01, k=0.01$ for $\alpha=1.8$ and $\beta=0.9$. The approximate solutions obtained by the proposed method are consistent with the analytical solutions.

\begin{figure}
\begin{center}
\mbox{\includegraphics[height=1.77in]{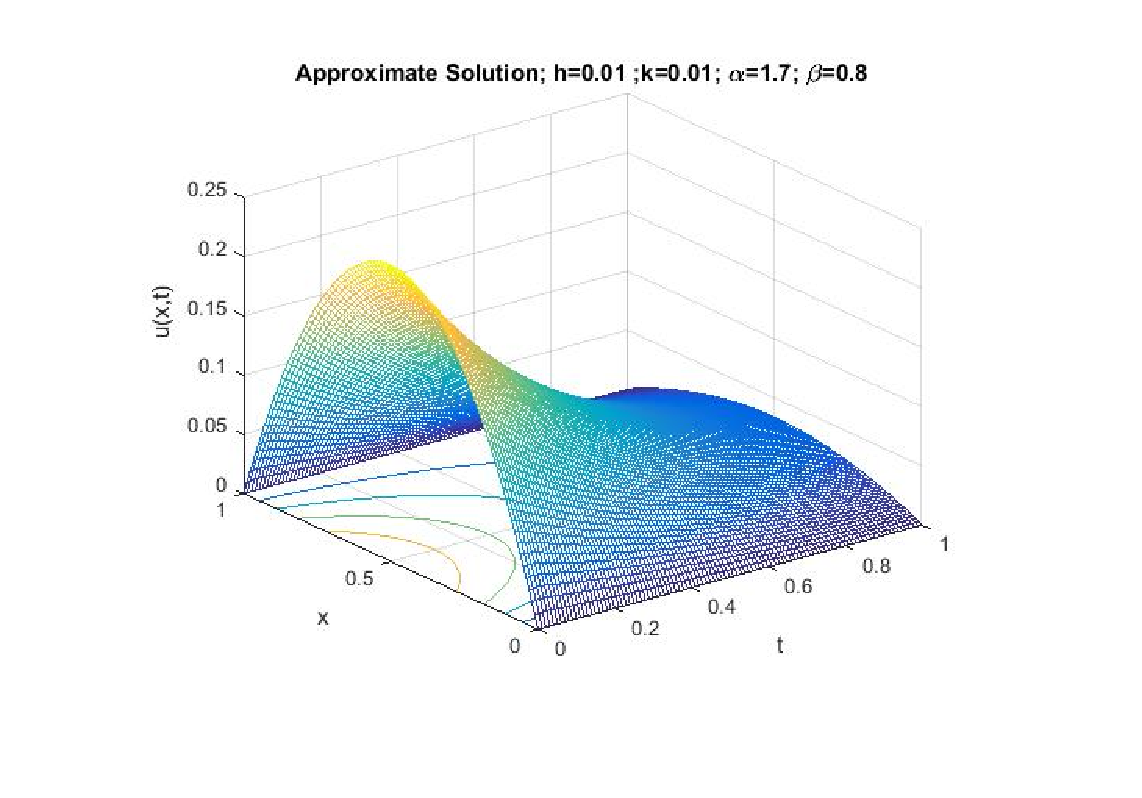}}
\mbox{\includegraphics[height=1.77in]{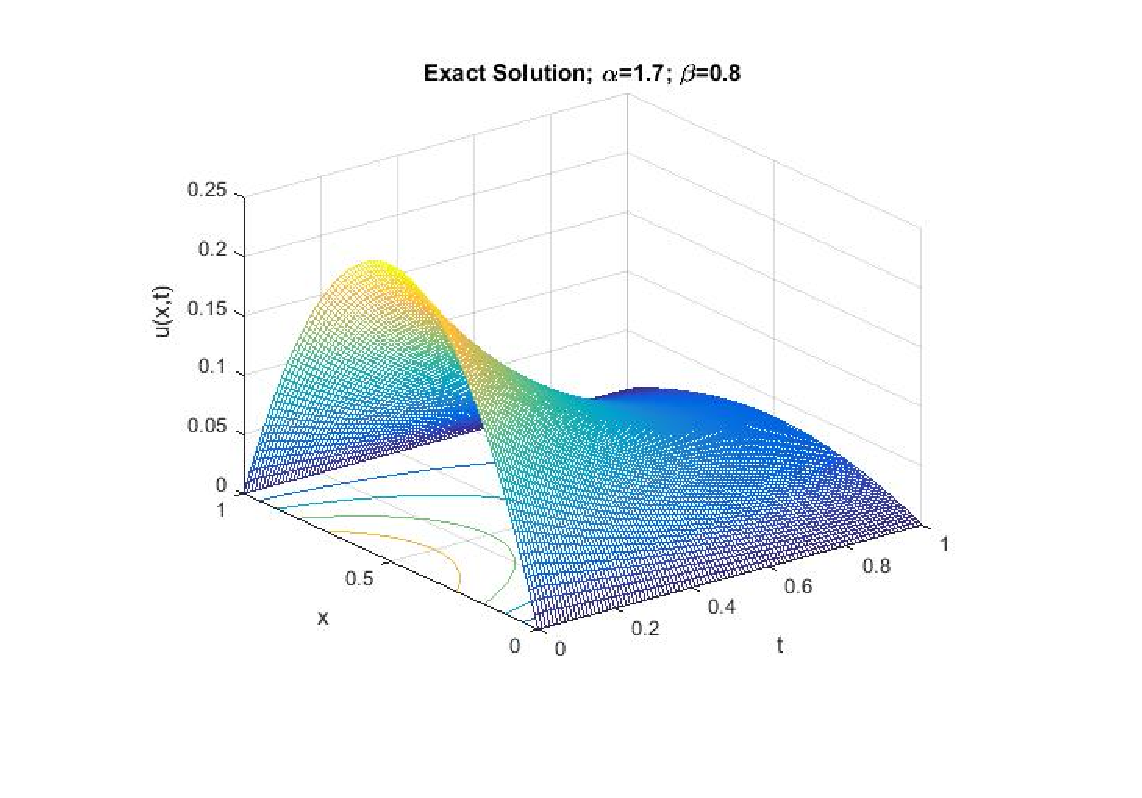}}
\end{center}
\caption{3D plot of the numerical solution and analytical solution for RSFADE (\ref{eqnarray1804091134})-(\ref{eqnarray1804091136})}
\label{fig:5-2-3}
\end{figure}

Figure \ref{fig:5-2-3} displays the approximate solution (Left) and analytical solution (Right) profiles of RSFADE (\ref{eqnarray1804091134})-(\ref{eqnarray1804091136}) over space for $0<t<1$ when $\alpha=1.7$, $\beta=0.8$. Figures \ref{fig:5-2-1} and \ref{fig:5-2-3}, and Tables \ref{tab:3} and \ref{tab:4} exemplify the validity and accuracy of the numerically proposed method.
\end{example}
\section{Conclusions}
In this paper, we considered a novel finite difference method based on a fractional centered difference formula from an arbitrary order for the discretisation of the Riesz fractional derivatives coupled with a $[3,3]$ Pad\'{e} approximation method for time stepping strategy for the numerical solution of the Riesz fractional advection-dispersion equation in a finite domain with an initial value and homogeneous Dirichlet boundary conditions. It is proved that the proposed method is unconditionally stable in view of the matrix analysis method. Numerical results obtained from solving the Riesz fractional advection-dispersion equation demonstrates the theoretical results and verify the efficiency of the proposed method.


\section*{Acknowledgments}
The authors express their deep gratitude to the Research Council of University of Mohaghegh Ardabili for funding this study.


\end{document}